\renewcommand*{\bar}{\overline}
\newcommand{\gfp}[1]{\Gamma_p{\left({#1}\right)}}
\newcommand{\biggfp}[1]{\Gamma_p{\bigl({#1}\bigr)}}
\theoremstyle{plain}
\newtheorem{theorem}{Theorem}[section]
\newtheorem{lemma}[theorem]{Lemma}
\newtheorem{cor}[theorem]{Corollary}
\theoremstyle{definition}
\newtheorem{defi}[theorem]{Definition}
\numberwithin{equation}{section}
\def\imod#1{\allowbreak\mkern5mu({\operator@font mod}\,\,#1)}
\begin{document}

\title[Dwork hypersurfaces and hypergeometric functions]{The number of $\mathbb{F}_p$-points on Dwork hypersurfaces\\ and hypergeometric functions}


\author{Dermot M\lowercase{c}Carthy}
\address{Dermot M\lowercase{c}Carthy, Department of Mathematics \& Statistics\\
Texas Tech University\\
Lubbock, TX 79410-1042\\
USA}
\email{dermot.mccarthy@ttu.edu}

\thanks{This work was supported by a grant from the Simons Foundation (\#353329, Dermot McCarthy).}


\subjclass[2010]{Primary: 11G25, 33E50; Secondary: 11S80, 11T24, 33C99}


\begin{abstract}
We provide a formula for the number of $\mathbb{F}_{p}$-points on the Dwork hypersurface
$$x_1^n  + x_2^n \dots + x_n^n - n \lambda \, x_1 x_2 \dots x_n=0$$ 
in terms of a $p$-adic hypergeometric function previously defined by the author. This formula holds in the general case, i.e for any $n, \lambda \in \mathbb{F}_p^{*}$ and for all odd primes $p$, thus extending results of Goodson and Barman et al which hold in certain special cases.
\end{abstract}

\maketitle


\section{Introduction}\label{sec_Intro}

The first part of the Weil conjectures, the rationality of the zeta-function of algebraic varieties over finite fields, was proved by Dwork in a 1960 paper \cite{Dw1} using $p$-adic analysis. Subsequently, Dwork further developed his $p$-adic techniques and studied the special case of zeta functions of non-singular projective hypersurfaces. In particular, he examined how his $p$-adic constructions varied within a family, his so-called \emph{deformation theory} \cite{Dw2, Dw3}. The family
\begin{equation}\label{Dwork_HypS}
 x_1^n  + x_2^n \dots + x_n^n - n \lambda \, x_1 x_2 \dots x_n=0
 \end{equation}
appears often in his work and is now known as the Dwork family of hypersurfaces.
In the early 1990's, the $n=5$ case appeared in the celebrated work of Candelas et al \cite{COGP} on mirror symmetry, thus reviving interest in the Dwork family.

More recently, formulas for counting the number of points over finite fields on the Dwork hypersurface using hypergeometric functions have been of special interest. Let $\mathbb{F}_{q}$ denote the finite field with $q$ elements, where $q$ is a power of a prime $p$. Koblitz \cite{Ko4} provides a formula for the number of $\mathbb{F}_{q}$-points on monomial deformations of a diagonal hypersurface, of which the Dwork family is a special case, in terms of Gauss sums, when $q$ is in a particular congruence class. He then highlights the analogy between this formula and the Barnes integral for classical hypergeometric series. In \cite{McC5}, this author provided a simple formula for the number of $\mathbb{F}_{p}$-points on the Dwork hypersurface, in the case ${n=5}$ and ${\lambda=1}$, in terms of a finite field hypergeometric function, when $p \equiv 1 \pmod 5$. 
We then extended this result to all odd primes using a hypergeometric type function defined in terms of the $p$-adic gamma function (see Definition \ref{def_Gp} below).
In \cite{Go}, Goodson considers the ${n=4}$ case and gives formulas for the number of $\mathbb{F}_{q}$-points in terms of finite field hypergeometric functions when $q \equiv 1 \pmod 4$, and extends to all odd primes using this author's $p$-adic hypergeometric function. She also conjectures a formula in the special case that $n$ is prime and that $p \not\equiv 1 \pmod n$, which was proven by Barman et al in \cite{BRS}. 
  
The purpose of this paper is to provide a formula for the number of $\mathbb{F}_{p}$-points on the Dwork hypersurface in terms of the $p$-adic hypergeometric function in the general case, i.e for any $n$ and $\lambda$ in $\mathbb{F}_p^{*}$, and which holds for all odd primes $p$.


 \section{Statement of Results}\label{sec_Results}

We first define the $p$-adic hypergeometric function. Let $\gfp{\cdot}$ denote Morita's $p$-adic gamma function and let $\omega$ denote the Teichm\"{u}ller character of $\mathbb{F}_p$ with $\bar{\omega}$ denoting its character inverse. 
For $x \in \mathbb{Q}$ we let  $\left\lfloor x \right\rfloor$ denote the greatest integer less than or equal to $x$ and
$\langle x \rangle$ denote the fractional part of $x$, i.e. $x- \left\lfloor x \right\rfloor$.
\begin{defi}\cite[Definition 1.1]{McC7}\label{def_Gp}
Let $p$ be an odd prime and let $x \in \mathbb{F}_p$. For $m \in \mathbb{Z}^{+}$ and $1 \leq i \leq m$, let $a_i, b_i \in \mathbb{Q} \cap \mathbb{Z}_p$.
Then we define  
\begin{multline*}
{_{m}G_{m}}
\biggl[ \begin{array}{cccc} a_1, & a_2, & \dotsc, & a_m \\
 b_1, & b_2, & \dotsc, & b_m \end{array}
\Big| \; x \; \biggr]_p
: = \frac{-1}{p-1}  \sum_{j=0}^{p-2} 
(-1)^{jm}\;
\bar{\omega}^j(x)\\
\times \prod_{i=1}^{m} 
\frac{\biggfp{\langle a_i -\frac{j}{p-1}\rangle}}{\biggfp{\langle a_i \rangle}}
\frac{\biggfp{\langle - b_i +\frac{j}{p-1}\rangle}}{\biggfp{\langle -b_i \rangle}}
(-p)^{-\lfloor{\langle a_i \rangle -\frac{j}{p-1}}\rfloor -\lfloor{\langle -b_i \rangle +\frac{j}{p-1}}\rfloor}.
\end{multline*}
\end{defi}
\noindent Throughout the paper we will refer to this function as ${_{m}G_{m}}[\cdots]$.
We note that the value of ${_{m}G_{m}}[\cdots]$ depends only on the fractional part of the $a$ and $b$ parameters, and is invariant if we change the order of the parameters.

We now describe our main result. We consider the Dwork hypersurface as described in (\ref{Dwork_HypS}).
Let $d:=\gcd({p-1},n)$
and
\begin{equation}\label{Def_W}
W:=\{w=(w_1, w_2, \ldots, w_n) \in \mathbb{Z}^n : 0 \leq w_i < d, \sum_{i=1}^n w_i \equiv 0 \pmod d\}. 
\end{equation} 
Define an equivalence relation $\sim$ on $W$ by 
\begin{equation}\label{Def_tilde}
w \sim w \prime \textup{ if } w- w\prime \textup{ is a multiple modulo $d$ of $(1,1, \ldots, 1)$}. 
\end{equation}
We note $|W|=d^{n-1}$ and $|{W/\sim}|=d^{n-1}$ as every equivalence class has $d$ elements. We will denote the class containing $w$ by $[w]$. We note also that each class contains a representative $w$ where some $w_i = 0$, for $1 \leq i \leq n$. We will write $[w^{\ast}]$ to indicate that we have chosen such a representative for a particular class.

For a given $w=(w_1, w_2, \ldots, w_n) \in W$, define $n_k$ to be the number of $k$'s appearing in $w$, i.e.,
$n_k = |\{w_i \mid  1 \leq i \leq n, w_i=k \} |.$
We then let $S_w := \{ k \mid 0 \leq k \leq {d-1}, n_k=0 \}$ and $S_w^{c}$ denote its complement in $\{0,1 \cdots, {d-1}\}$. So the elements of $S_w$ are the numbers from $0$ to ${d-1}$, inclusive, which do not appear in $w$.
We define the following lists
\begin{equation}\label{def_Aw}
A_w: \left[ \tfrac{d-k}{d} \mid k \in S_w \right] \cup \left[ \tfrac{h}{n} \mid 0 \leq h \leq n-1, h \not\equiv 0 \imod{\tfrac{n}{d}} \right];
\end{equation}
\begin{equation}\label{def_Bw}
B_w: \left[  \tfrac{d-k}{d} \, \textup{repeated $n_k$-1 times} \mid k \in S_w^c \right] .
\end{equation}
We note both lists contain 
\begin{equation}\label{def_s}
s:=n-|S_w^c|
\end{equation}
numbers.

\begin{theorem}\label{thm_Main}
For a prime $p$, let $N_p(\lambda)$ be the number of points in $\mathbb{P}^{n-1}(\mathbb{F}_p)$ on 
$$ x_1^n  + x_2^n \dots + x_n^n - n \lambda \, x_1 x_2 \dots x_n=0,$$
for some $n, \lambda \in \mathbb{F}_p^{*}$. 
Define $d:=\gcd({p-1},n)$ and let $W$, $\sim$, $A_w$, $B_w$ and $s$ be defined by (\ref{Def_W})
- (\ref{def_s})
respectively. 
 Then for $p$ odd,
\begin{equation*}
N_p(\lambda) = \frac{p^{n-1}-1}{p-1} + (-1)^n \sum_{[w^{\ast}] \in W/\sim} (-p)^{\frac{1}{d} \sum_{i=1}^{n} w_i} \;
\prod_{i=1}^{n} \gfp{\tfrac{w_i}{d}} \;
{_{s}G_{s}}
\biggl[ \begin{array}{c} A_w \\
 B_w \end{array}
\Big| \; \lambda^n \; \biggr]_p.
\end{equation*}
\end{theorem}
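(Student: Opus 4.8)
The plan is to count $\mathbb{F}_p$-points on the affine cone via additive characters, convert the resulting character sums into multiplicative (Gauss/Jacobi) sums, and then repackage those Gauss sums into the $p$-adic hypergeometric function using the Gross--Koblitz formula. Let $\psi$ be a fixed nontrivial additive character of $\mathbb{F}_p$ and let $N_p^*(\lambda)$ denote the number of affine solutions with all $x_i \neq 0$; one has $N_p(\lambda) = \frac{p^{n-1}-1}{p-1} + \frac{1}{p-1}\bigl(N_p^*(\lambda) - (\text{affine count off the torus})\bigr)$, but it is cleaner to run the standard Weil/Delsarte argument directly on the projective count by stratifying $\mathbb{P}^{n-1}$ according to which coordinates vanish. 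Writing
\[
N_p^*(\lambda) = \frac{1}{p}\sum_{t\in\mathbb{F}_p}\;\sum_{x_1,\dots,x_n\in\mathbb{F}_p^*}\psi\!\left(t\bigl(x_1^n+\dots+x_n^n - n\lambda x_1\cdots x_n\bigr)\right),
\]
the $t=0$ term gives $(p-1)^n$, and for $t\neq 0$ one substitutes $x_i \mapsto x_i/t$ (or rescales) and expands each $\psi(t x_i^n)$ and $\psi(-n\lambda t\, x_1\cdots x_n)$ using the orthogonality $\sum_{x\in\mathbb{F}_p^*}\psi(yx) \chi(x)$-type relations, i.e. the expansion of $\psi$ on $\mathbb{F}_p^*$ in terms of multiplicative characters via Gauss sums $g(\chi)=\sum_{x}\chi(x)\psi(x)$. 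This is the classical computation (going back to Weil, and carried out for monomial deformations of diagonal hypersurfaces by Koblitz in \cite{Ko4}); the key point is that the presence of the monomial term $x_1\cdots x_n$ forces all the character variables on the $n$ diagonal terms to coincide up to the $d$-torsion structure, which is exactly where the set $W$ and the gcd $d=\gcd(p-1,n)$ enter. After collecting terms one obtains $N_p^*(\lambda)$, and hence $N_p(\lambda)$, as a sum over characters $\chi$ of $\mathbb{F}_p^*$ (equivalently over $j$ modulo $p-1$), weighted by products of Gauss sums $g(\chi^{a})$ and a power of $\lambda^n$, with the indexing naturally organized by the equivalence classes $[w]\in W/\!\sim$ — the "$\sim$" relation reflecting the freedom to twist all $n$ characters by a common character, which leaves the monomial term's contribution unchanged.

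Next I would perform the bookkeeping that turns the Gauss-sum expression into the $A_w$, $B_w$ data. For a fixed class $[w^*]$ with chosen representative having some $w_i=0$, the characters attached to the diagonal terms split into groups according to the multiset structure of $w$: repeated values $k$ (i.e. $n_k \geq 2$, $k\in S_w^c$) contribute Jacobi-sum factors that become the denominator parameters $B_w$ with the stated multiplicities $n_k - 1$, while the "missing" residues $k\in S_w$ together with the residues $h$ with $h\not\equiv 0 \pmod{n/d}$ assemble into the numerator parameters $A_w$; the count $s = n - |S_w^c|$ is precisely the number of Gauss-sum factors that survive as a genuine ${}_sG_s$ after the Jacobi-sum cancellations. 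The prefactor $(-p)^{\frac{1}{d}\sum w_i}\prod_i \Gamma_p(w_i/d)$ arises from applying Gross--Koblitz to the "leftover" Gauss sums indexed by the fixed vector $w$ itself (the ones not summed over $j$), using $g(\bar\omega^a) = -\pi^{(p-1)\langle a/(p-1)\rangle}\,\Gamma_p(\langle \tfrac{a}{p-1}\rangle)$ and tracking the powers of $p = -\pi^{p-1}$ up to sign. Finally, the sum over $j$ (equivalently over $\chi$) with the remaining Gauss-sum quotients, once rewritten via Gross--Koblitz, matches term-by-term the definition of ${}_sG_s[\,A_w;B_w\,|\,\lambda^n\,]_p$ in Definition \ref{def_Gp}: the product $\prod \Gamma_p(\langle a_i - \tfrac{j}{p-1}\rangle)/\Gamma_p(\langle a_i\rangle)$ and its $b_i$-counterpart are exactly the normalized Gauss-sum ratios, the factor $(-1)^{js}$ absorbs sign discrepancies from $g(\varepsilon)=-1$ and the $(-p)$-powers reproduce the floor-function exponents. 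I would also handle the degenerate characters (trivial $\chi$) separately — these produce the $\frac{p^{n-1}-1}{p-1}$ main term and a few boundary corrections that must be checked to vanish or combine correctly.

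The main obstacle, I expect, is the combinatorial/normalization step: correctly identifying which multiplicative characters appear, with which exponents and multiplicities, after expanding all $n+1$ additive-character factors and imposing the constraint coming from the $t$-sum and the monomial term — in particular, verifying that the "diagonal" characters are forced into the shape $\bar\omega^{w_i + (n/d)\cdot(\text{something})}$ so that the $h$-parameters with $h\not\equiv 0\pmod{n/d}$ show up in $A_w$ exactly once each, and that the common-twist ambiguity is precisely the relation $\sim$, giving $|W/\!\sim| = d^{n-1}$ classes each of size $d$. Getting every sign right — there are many: from $g(\varepsilon) = -1$, from $(-1)^{jm}$ in the definition, from $\prod \Gamma_p(w_i/d)$ versus the Teichmüller-indexed version, and the overall $(-1)^n$ — is the fiddly part, and I would pin these down by checking the known special cases ($n=4$, $n=5$, and $p\not\equiv 1 \pmod n$ with $n$ prime, so $d=1$, $W/\!\sim$ trivial) of Goodson \cite{Go} and Barman et al \cite{BRS} against the formula before declaring victory. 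A secondary technical point is ensuring the Gross--Koblitz substitution is uniform in $j$, i.e. that the floor-function exponents in Definition \ref{def_Gp} really do track $-\operatorname{ord}_p$ of the relevant Gauss-sum products; this is routine but must be done carefully since it is where the $p$-adic function genuinely differs from the naive finite-field hypergeometric function (which would only be valid when $p\equiv 1\pmod n$).
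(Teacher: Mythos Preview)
Your plan is the paper's own: begin from the Koblitz/Weil Gauss-sum count (the paper cites this as Theorem~\ref{thm_Koblitz} rather than rederiving it via additive characters), pass to classes $[w]\in W/{\sim}$ so that the inner character sum ranges over all $j\in\{0,\dots,p-2\}$, then push everything through Gross--Koblitz and match against Definition~\ref{def_Gp}. The ``boundary corrections'' you allude to are a genuine step rather than a triviality: the paper isolates a piece $R''_{[w]}$ (the contribution from those $j$ where some $T^{kt+j}$ is trivial after one copy of each $g(T^{kt+j})$ has been flipped via $g(\chi)g(\bar\chi)=\chi(-1)p$) and shows that its sum over $W/{\sim}$ exactly cancels the leftover terms $-\tfrac{1}{p}\sum_{\text{some }w_i=0}\prod_i g(T^{w_it})$ coming from the $N_p(0,w)$ contributions.

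One ingredient your sketch does not name, and which is not mere bookkeeping, is the $p$-adic gamma multiplication formula (Lemma~\ref{lem_pGamma}, equivalently a Hasse--Davenport product relation on the Gauss-sum side). This is what converts the single factor $g(T^{-nj})$, after Gross--Koblitz, into $\prod_{h=0}^{n-1}\Gamma_p\bigl(\langle \tfrac{h}{n}-\tfrac{j}{p-1}\rangle\bigr)$, and is precisely where the parameters $\tfrac{h}{n}$ with $h\not\equiv 0\pmod{n/d}$ in $A_w$ originate; without it there is no term-by-term match with the defining sum of ${}_sG_s$. Also, the prefactor $(-p)^{\frac{1}{d}\sum w_i}\prod_i\Gamma_p(\tfrac{w_i}{d})$ does not come from $j$-independent ``leftover'' Gauss sums as you suggest: every Gauss sum in Koblitz's formula carries a $j$, and the prefactor only emerges after normalizing all the $\Gamma_p$-ratios inside the $j$-sum and computing the residual power of $(-p)$ via a floor-function identity (the $x-z$ calculation in the paper), together with the observation that $\prod_{k\in S_w^c}\Gamma_p(\langle\tfrac{k}{d}\rangle)^{n_k}=\prod_i\Gamma_p(\tfrac{w_i}{d})$.
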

\noindent If $p \mid n$ then the problem reduces to the $\lambda=0$ case, formulas for which are well known and can be found in \cite{IR, W}. 
We now look at a couple of special cases.

\begin{cor}\label{cor_RelPrime} 
If $d=\gcd({p-1},n) = 1$ then
\begin{equation*}
N_p(\lambda) = \frac{p^{n-1}-1}{p-1} + (-1)^n  \;
{_{n-1}G_{n-1}}
\biggl[ \begin{array}{cccc} \frac{1}{n} & \frac{2}{n} & \dots & \frac{n-1}{n} \\
 1 & 1 & \dots & 1 \end{array}
\Big| \; \lambda^n \; \biggr]_p.
\end{equation*}
\end{cor}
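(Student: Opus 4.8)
The plan is to derive Corollary \ref{cor_RelPrime} as a direct specialization of Theorem \ref{thm_Main} to the case $d = \gcd(p-1, n) = 1$. First I would determine the index set of the outer sum. With $d = 1$, the defining condition $0 \leq w_i < d$ in (\ref{Def_W}) forces every coordinate $w_i = 0$, so $W = \{(0, 0, \ldots, 0)\}$ is a single point and hence $W/\!\sim$ consists of the single class $[(0, \ldots, 0)]$. Consequently the sum over $[w^{\ast}] \in W/\!\sim$ collapses to one summand, which I evaluate at the representative $w = (0, \ldots, 0)$.

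Next I would record the combinatorial data attached to this $w$. Since all coordinates equal $0$, we have $n_0 = n$ and $n_k = 0$ for $k \geq 1$; because $d = 1$ the only candidate index is $0$, and as $n_0 \neq 0$ this gives $S_w = \emptyset$ and $S_w^{c} = \{0\}$, so $s = n - |S_w^{c}| = n - 1$ by (\ref{def_s}). For the upper list (\ref{def_Aw}), the first block indexed by $S_w$ is empty, while in the second block the restriction $h \not\equiv 0 \imod{\tfrac{n}{d}}$ becomes $h \not\equiv 0 \imod{n}$ and therefore excludes only $h = 0$ from $\{0, 1, \ldots, n-1\}$; hence $A_w = [\tfrac{1}{n}, \tfrac{2}{n}, \ldots, \tfrac{n-1}{n}]$. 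For the lower list (\ref{def_Bw}), the single index $k = 0 \in S_w^{c}$ contributes $\tfrac{d-0}{d} = 1$ repeated $n_0 - 1 = n - 1$ times, so $B_w = [1, 1, \ldots, 1]$ with $n-1$ entries. These are exactly the parameter lists appearing in the statement of the corollary.

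Finally I would dispose of the two prefactors multiplying the hypergeometric term. The power of $-p$ is $(-p)^{\frac{1}{d}\sum_i w_i} = (-p)^{0} = 1$, and the gamma product is $\prod_{i=1}^{n} \gfp{\tfrac{w_i}{d}} = \gfp{0}^{n} = 1$, using the normalization $\gfp{0} = 1$ of Morita's $p$-adic gamma function. Substituting all of this into the formula of Theorem \ref{thm_Main} leaves precisely
\[
N_p(\lambda) = \frac{p^{n-1}-1}{p-1} + (-1)^n \; {_{n-1}G_{n-1}} \biggl[ \begin{array}{cccc} \frac{1}{n} & \frac{2}{n} & \dots & \frac{n-1}{n} \\ 1 & 1 & \dots & 1 \end{array} \Big| \; \lambda^n \; \biggr]_p ,
\]
which is the claimed identity. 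No genuine obstacle arises in this argument: the entire content is the bookkeeping of the sets $W$, $S_w$, $S_w^{c}$ and the lists $A_w$, $B_w$ under the hypothesis $d = 1$, so the only points demanding attention are verifying that $W$ is a singleton, that the combinatorial definitions collapse cleanly to the stated parameters, and recalling the value $\gfp{0} = 1$.
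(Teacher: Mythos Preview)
Your proposal is correct and follows essentially the same approach as the paper's own proof: both specialize Theorem~\ref{thm_Main} to $d=1$, observe that $W$ reduces to the single tuple $(0,\ldots,0)$ with $S_w=\emptyset$ and $S_w^c=\{0\}$, and read off $A_w$, $B_w$, and $s=n-1$ accordingly. Your write-up is slightly more detailed in checking the prefactors $(-p)^0=1$ and $\gfp{0}^n=1$, but the argument is the same.
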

\noindent Corollary \ref{cor_RelPrime} is a slight generalization of the result we mentioned in Section \ref{sec_Intro} for the case $n$ is prime and $p \not\equiv 1 \pmod n$, which was conjectured by Goodson \cite{Go} and proven by Barman et al in \cite{BRS}.

As noted in Section \ref{sec_Intro} some of the previous results counting the number of $\mathbb{F}_{p}$-points on the Dwork hypersurface have been in terms of finite field hypergeometric functions and were valid only for primes in certain congruence classes. This restriction to primes in certain congruence classes is a common theme in results involving finite field hypergeometric functions. Establishing results for all primes is the main reason we developed the $p$-adic function defined above. (See \cite{McC7} for a more complete discussion.) Conversely, if we choose to restrict results involving ${_{m}G_{m}}[\cdots]$ to primes in certain congruence classes then it is always possible to reduce these results to expressions in terms of $_mF_m(\cdots)$. Our next corollary does exactly that for Theorem \ref{thm_Main} in the case $p \equiv 1 \pmod n$.


Let $\widehat{\mathbb{F}^{*}_{q}}$ denote the group of multiplicative characters of $\mathbb{F}^{*}_{q}$.
We extend the domain of $\chi \in \widehat{\mathbb{F}^{*}_{q}}$ to $\mathbb{F}_{q}$ by defining $\chi(0):=0$ (including for the trivial character $\varepsilon$) and denote $\bar{\chi}$ as the inverse of $\chi$.
Let $\theta$ be a fixed non-trivial additive character of $\mathbb{F}_q$ and for $\chi \in \widehat{\mathbb{F}^{*}_{q}}$ we define the Gauss sum $g(\chi):= \sum_{x \in \mathbb{F}_p} \chi(x) \theta(x)$.
We define the finite field hypergeometric function as follows. (See \cite{McC6} for the relationship to the functions of Greene and Katz.)
\begin{defi}\cite[Definition 1.4]{McC6}\label{def_F} 
For $A_1, A_2, \dotsc, A_m, B_1, B_2 \dotsc, B_m \in \widehat{\mathbb{F}_q^{*}}$ and $x \in \mathbb{F}_q$,
\begin{equation*}\label{def_HypFnFF}
{_{m}F_{m}} {\biggl( \begin{array}{cccc} A_1, & A_2, & \dotsc, & A_m \\
 B_1, & B_2, & \dotsc, & B_m \end{array}
\Big| \; x \biggr)}_{q}\\
:= \frac{-1}{p-1}  \sum_{\chi \in \widehat{\mathbb{F}_p^{*}}}
\prod_{i=1}^{m} \frac{g(A_i \chi)}{g(A_i)} \frac{g(\bar{B_i \chi})}{g(\bar{B_i})}
 \chi(-1)^{m}
 \chi(x).
\end{equation*}
\end{defi}
\noindent We note Definition \ref{def_HypFnFF} is stated slightly differently than the original in \cite{McC6}, which has an implied $B_1 = \varepsilon$, as is often the custom in hypergeometric functions. The relationship between $_mF_m(\dots)$ and ${_{m}G_{m}}[\cdots]$ is outlined in \cite{McC7} and reproduced below in a slightly altered form to take account of the altered definition of $_mF_m(\dots)$ above.
\begin{lemma}[\cite{McC7} Lemma 3.3]\label{lem_G_to_F}
For a fixed odd prime $p$, let $A_i, B_k \in \widehat{\mathbb{F}_p^{*}}$ be given by $\bar{\omega}^{a_i(p-1)}$ and $\bar{\omega}^{b_k(p-1)}$ respectively, where $\omega$ is the Teichm\"{u}ller character . Then
\begin{equation*}
{_{m}F_{m}} {\biggl( \begin{array}{cccc} A_1, & A_2, & \dotsc, & A_m \\
 B_1, & B_2, & \dotsc, & B_m \end{array}
\Big| \; t \biggr)}_{p}
=
{_{m}G_{m}}
\biggl[ \begin{array}{cccc} a_1, & a_2, & \dotsc, & a_m \\
 b_1, & b_2, & \dotsc, & b_m \end{array}
\Big| \; t^{-1} \; \biggr]_p.
\end{equation*}
\end{lemma}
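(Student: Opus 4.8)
The plan is to rewrite every Gauss sum appearing in Definition \ref{def_F} as a $p$-adic Gamma value via the Gross--Koblitz formula, and then to read off Definition \ref{def_Gp} factor by factor. To begin, I would reindex the character sum: since $\{\chi : \chi \in \widehat{\mathbb{F}_p^{*}}\} = \{\omega^j : 0 \leq j \leq p-2\}$, I substitute $\chi = \omega^j = \bar{\omega}^{-j}$, turning the sum over $\chi$ in ${_{m}F_{m}}$ into a sum over $j$ from $0$ to $p-2$. This single choice already produces two of the cosmetic features of the target: because $\omega(-1) = -1$ for odd $p$, we get $\chi(-1)^m = ((-1)^j)^m = (-1)^{jm}$, matching the sign in Definition \ref{def_Gp}; and $\chi(t) = \omega^j(t) = \bar{\omega}^j(t^{-1})$, which is exactly the source of the argument inversion $t \mapsto t^{-1}$ in the statement. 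I would also remark at the outset that ${_{m}F_{m}}$ is independent of the auxiliary additive character $\theta$: replacing $\theta(x)$ by $\theta(ax)$ scales each $g(A_i\chi)/g(A_i)$ by $\bar{\chi}(a)$ and each $g(\bar{B_i\chi})/g(\bar{B_i})$ by $\chi(a)$, so the product over $i$ is unchanged. Hence I may fix $\theta$ to be the character for which Gross--Koblitz holds in its clean form.

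Next, writing $A_i = \bar{\omega}^{a_i(p-1)}$ and $B_i = \bar{\omega}^{b_i(p-1)}$, I identify the four characters of the $i$-th factor as Teichm\"{u}ller powers: $A_i\chi = \bar{\omega}^{a_i(p-1)-j}$ and $\overline{B_i\chi} = \bar{\omega}^{\,j - b_i(p-1)}$ in the numerators, and $A_i = \bar{\omega}^{a_i(p-1)}$, $\bar{B_i} = \bar{\omega}^{-b_i(p-1)}$ in the denominators. Applying Gross--Koblitz in the form $g(\bar{\omega}^{k}) = -\pi^{(p-1)\langle k/(p-1)\rangle}\,\Gamma_p(\langle k/(p-1)\rangle)$, with $\pi^{p-1} = -p$, each of the four Gauss sums becomes $-\pi^{(\cdots)}\Gamma_p(\cdots)$. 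The four factors of $-1$ cancel in pairs inside the two ratios, and the surviving $\Gamma_p$ factors reproduce exactly $\tfrac{\Gamma_p(\langle a_i - j/(p-1)\rangle)}{\Gamma_p(\langle a_i\rangle)}\cdot\tfrac{\Gamma_p(\langle -b_i + j/(p-1)\rangle)}{\Gamma_p(\langle -b_i\rangle)}$ of Definition \ref{def_Gp}.

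The one genuine computation, and the step I expect to be the main obstacle, is showing that the accumulated powers of $\pi$ collapse to the factor $(-p)^{-\lfloor\langle a_i\rangle - j/(p-1)\rfloor - \lfloor\langle -b_i\rangle + j/(p-1)\rfloor}$. Here I would use the elementary fractional-part identities $\langle a_i - \tfrac{j}{p-1}\rangle - \langle a_i\rangle = -\tfrac{j}{p-1} - \lfloor\langle a_i\rangle - \tfrac{j}{p-1}\rfloor$ and $\langle -b_i + \tfrac{j}{p-1}\rangle - \langle -b_i\rangle = \tfrac{j}{p-1} - \lfloor\langle -b_i\rangle + \tfrac{j}{p-1}\rfloor$, both of which follow from the observation that $\langle z\rangle$ depends only on $z \bmod 1$. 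Adding these, the $\pm\tfrac{j}{p-1}$ terms cancel, so the total exponent of $\pi$ in the $i$-th factor is $-(p-1)\bigl(\lfloor\langle a_i\rangle - \tfrac{j}{p-1}\rfloor + \lfloor\langle -b_i\rangle + \tfrac{j}{p-1}\rfloor\bigr)$; since $\pi^{p-1} = -p$, this is precisely the required power of $-p$. Assembling the $\tfrac{-1}{p-1}$ prefactor, the sign $(-1)^{jm}$, the value $\bar{\omega}^j(t^{-1})$, and the product over $i$ of the matched $\Gamma_p$ and $(-p)$ factors then yields Definition \ref{def_Gp} evaluated at $t^{-1}$, which is the asserted identity. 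The care required is entirely in the bookkeeping of fractional parts and floors, and in fixing the normalizations of $\pi$ and $\theta$ so that Gross--Koblitz applies verbatim.
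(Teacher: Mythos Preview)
The paper does not give its own proof of this lemma; it is quoted (in slightly adapted form) from \cite{McC7}, Lemma~3.3. Your argument is correct and is precisely the expected one: reindex the character sum by $\chi=\omega^{j}$, apply the Gross--Koblitz formula (Theorem~\ref{thm_GrossKoblitz}) to each of the four Gauss sums in the $i$-th factor, and use the fractional-part identity $\langle x+r\rangle-\langle x\rangle=r-\lfloor\langle x\rangle+r\rfloor$ together with $\pi^{p-1}=-p$ to convert the accumulated power of $\pi$ into the factor $(-p)^{-\lfloor\langle a_i\rangle-\frac{j}{p-1}\rfloor-\lfloor\langle -b_i\rangle+\frac{j}{p-1}\rfloor}$ of Definition~\ref{def_Gp}; the sign $(-1)^{jm}$ and the argument inversion $t\mapsto t^{-1}$ fall out exactly as you describe.
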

Let $p \equiv 1 \pmod{n}$ and so $d:=\gcd(p-1,n) = n$ and $t:=\frac{p-1}{d}=\frac{p-1}{n}$. Let $T$ be a fixed generator for $\widehat{\mathbb{F}_p^{*}}$ and define the following lists
\begin{equation*}\label{def_ATw}
A_{T,w}^{\prime}: \left[ T^{(n-k)t} \mid k \in S_w \right] ;
\end{equation*}
\begin{equation*}\label{def_BTw}
B_{T,w}^{\prime}: \left[  T^{(n-k)t} \, \textup{repeated $n_k$-1 times} \mid k \in S_w^c \right] .
\end{equation*}

\begin{cor}\label{cor_FF}
If $d=\gcd(p-1,n) = n$, i.e., $p \equiv 1 \pmod{n}$, and $t:=\frac{p-1}{d}$, then
\begin{equation*}
N_p(\lambda) = \frac{p^{n-1}-1}{p-1} +  \sum_{[w^{\ast}]  \in W/\sim}  
\prod_{i=1}^{n} g(T^{w_i t})\;
{_{|S_w|}F_{|S_w|}} {\biggl( \begin{array}{c} A_{T,w}^{\prime} \\
 B_{T,w}^{\prime} \end{array}
\Big| \; \lambda^{-n} \; \biggr)}_{p}.\\
\end{equation*}
\end{cor}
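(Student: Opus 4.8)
The plan is to obtain Corollary~\ref{cor_FF} as the specialization $d=n$ of Theorem~\ref{thm_Main}, translating the $p$-adic Gamma factors into Gauss sums via Gross--Koblitz and the ${_{s}G_{s}}$-function into an ${_{m}F_{m}}$-function via Lemma~\ref{lem_G_to_F}. First I would put $d=n$ in the statement of Theorem~\ref{thm_Main}. Since $\tfrac nd=1$, the condition $h\not\equiv 0\imod{\tfrac{n}{d}}$ in (\ref{def_Aw}) is satisfied by no $h$, so the second sublist of $A_w$ is empty and $A_w=\bigl[\tfrac{d-k}{d}\mid k\in S_w\bigr]$; in particular $s=|S_w|$ (compatible with $s=n-|S_w^c|$ and $|S_w|+|S_w^c|=d=n$), so the hypergeometric factor of Theorem~\ref{thm_Main} is ${_{|S_w|}G_{|S_w|}}[A_w;B_w\mid\lambda^{n}]_p$. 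It then remains to rewrite the scalar $(-1)^n(-p)^{\frac1d\sum_i w_i}\prod_{i=1}^{n}\gfp{\tfrac{w_i}{d}}$ and this ${_{|S_w|}G_{|S_w|}}$-term in the shape appearing in Corollary~\ref{cor_FF}.

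For the scalar I would use the Gross--Koblitz formula. Take $T=\bar{\omega}$, the inverse Teichm\"uller character (a generator of $\widehat{\mathbb{F}_p^{*}}$), and fix $\pi$ with $\pi^{p-1}=-p$. Then $g(T^{w_i t})=g(\bar{\omega}^{w_i t})=-\pi^{w_i t}\,\gfp{\langle\tfrac{w_i t}{p-1}\rangle}$, and since $0\le w_i<n$ and $t=\tfrac{p-1}{n}$ we have $0\le w_i t\le p-2$ and $\langle\tfrac{w_i t}{p-1}\rangle=\tfrac{w_i}{n}=\tfrac{w_i}{d}$, so no reduction of the exponent modulo $p-1$ is required. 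Taking the product over $1\le i\le n$ and using $\pi^{t\sum_i w_i}=(\pi^{p-1})^{\frac1d\sum_i w_i}=(-p)^{\frac1d\sum_i w_i}$ (legitimate because $\sum_i w_i\equiv 0\imod{d}$), I obtain $\prod_{i=1}^{n}g(T^{w_i t})=(-1)^n(-p)^{\frac1d\sum_i w_i}\prod_{i=1}^{n}\gfp{\tfrac{w_i}{d}}$, which is exactly the scalar of Theorem~\ref{thm_Main}, with the global sign $(-1)^n$ absorbed.

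For the hypergeometric factor I would apply Lemma~\ref{lem_G_to_F} with its parameter $t$ taken to be $\lambda^{-n}$, so that its argument becomes $t^{-1}=\lambda^{n}$, matching Theorem~\ref{thm_Main}. With top parameters $a_i=\tfrac{d-k}{d}=\tfrac{n-k}{n}$ ($k\in S_w$) and bottom parameters $b_i=\tfrac{n-k}{n}$ ($k\in S_w^c$, each with multiplicity $n_k-1$), the characters $\bar{\omega}^{a_i(p-1)}=\bar{\omega}^{(n-k)t}=T^{(n-k)t}$ are precisely the entries of $A_{T,w}^{\prime}$, and similarly the bottom characters form $B_{T,w}^{\prime}$; hence Lemma~\ref{lem_G_to_F} gives ${_{|S_w|}G_{|S_w|}}[A_w;B_w\mid\lambda^{n}]_p={_{|S_w|}F_{|S_w|}}(A_{T,w}^{\prime};B_{T,w}^{\prime}\mid\lambda^{-n})_p$. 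Substituting the last two identities into Theorem~\ref{thm_Main} yields Corollary~\ref{cor_FF} for $T=\bar{\omega}$; for a general generator $T^{\prime}=T^{c}$ with $\gcd(c,p-1)=1$ one checks that re-indexing the outer sum by the $\sim$-preserving bijection $w\mapsto cw\bmod n$ of $W$ (well defined since $n\mid p-1$) carries the $T^{\prime}$-formula to the $T$-formula, so the right-hand side does not depend on the chosen generator.

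I expect the only substantive work to be bookkeeping: matching $A_w$ and $B_w$ --- with the multiplicities $n_k-1$ and the substitution $k\mapsto d-k$ --- against $A_{T,w}^{\prime}$ and $B_{T,w}^{\prime}$, and tracking the exact sign and power of $p$ coming from Gross--Koblitz. The fussiest point is likely confirming independence of the right-hand side from the choice of generator $T$, which is conceptually routine but requires care with the induced action on $S_w$, $S_w^c$ and the $n_k$.
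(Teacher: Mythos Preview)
Your proposal is correct and follows essentially the same route as the paper: specialize $d=n$ in Theorem~\ref{thm_Main}, use Gross--Koblitz to identify $(-1)^n(-p)^{\frac1d\sum_i w_i}\prod_i\gfp{\tfrac{w_i}{d}}$ with $\prod_i g(\bar{\omega}^{w_i t})$, apply Lemma~\ref{lem_G_to_F} to convert ${_{s}G_{s}}[\cdots\mid\lambda^n]_p$ into ${_{|S_w|}F_{|S_w|}}(\cdots\mid\lambda^{-n})_p$, and then pass from $T=\bar{\omega}$ to an arbitrary generator via the bijection $[w]\mapsto[\alpha w\bmod n]$ on $W/\sim$. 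The paper's proof is organized identically, and your remark that $\gcd(c,n)=1$ follows from $n\mid p-1$ is exactly the ingredient the paper uses to see that this map is a well-defined isomorphism.
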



 \section{An Example}\label{sec_Examples}
In this section we give an example of how Theorem \ref{thm_Main} works in practice, when $n=4$.
Let $p$ be an odd prime and let $N_p(\lambda)$ be the number of points in $\mathbb{P}^{3}(\mathbb{F}_p)$ on 
$$ x_1^4  + x_2^4 +x_3^4 + x_4^4 - 4 \lambda \, x_1 x_2 x_3 x_4=0,$$
for some $\lambda \in \mathbb{F}_p^{*}$.

If $p \equiv 3 \pmod{4}$ then $d=\gcd({p-1},4)= 2$. We now evaluate the sets $W$ and $W/\sim$. We first note that the contribution of any $w=(w_1, w_2, w_3, w_4)$ to the sum in Theorem \ref{thm_Main} is the same as that for any permutation of $w$. We therefore list the elements of these sets up to permutation. We will however indicate, using a superscript, the total number of distinct permutations. So
$$W = \{ (0,0,0,0), (0,0,1,1)^6, (1,1,1,1) \}$$
and
$${W/\sim} = \{[0,0,0,0], [0,0,1,1]^3 \}.$$
When $w= (0,0,0,0)$ we see that $n_0=4, n_1=0$, and so $S_w=\{1\}$ and $S_w^c=\{0\}$ with $s:=n - |S_w^c|=3$.
Thus we get $A_w: \frac{1}{2}, \frac{1}{4}, \frac{3}{4}$ and $B_w: 1,1,1$. 
Now when $w=(0,0,1,1)$ we get that $n_0=2, n_1=2$, and so $S_w=\emptyset$ and $S_w^c=\{0, 1\}$ with $s=2$.
Thus we get $A_w: \frac{1}{4}, \frac{3}{4}$ and $B_w: 1,\frac{1}{2}$.
So when $p \equiv 3 \pmod 4$ we get
\begin{equation*}
N_p(\lambda) = \frac{p^{3}-1}{p-1} + 
{_{3}G_{3}}
\biggl[ \begin{array}{ccc} \frac{1}{4} & \frac{1}{2} & \frac{3}{4} \\
 1 & 1 & 1 \end{array}
\Big| \; \lambda^4 \; \biggr]_p
-3p \cdot
{_{2}G_{2}}
\biggl[ \begin{array}{cc} \frac{1}{4} & \frac{3}{4} \\
 1 & \frac{1}{2} \end{array}
\Big| \; \lambda^4 \; \biggr]_p.
\end{equation*}
This corresponds to Theorem 1.2 in \cite{Go}.

If  $p \equiv 1 \pmod{4}$ then $d=4$. Here we have
$${W/\sim} = \{[0,0,0,0], [0,0,2,2]^3, [0,0,1,3]^{12} \}.$$
When $w= (0,0,0,0)$ we have $A_w: \frac{1}{2}, \frac{1}{4}, \frac{3}{4}$ and $B_w: 1,1,1$.
When $w= (0,0,2,2)$ we have $A_w: \frac{1}{4}, \frac{3}{4}$ and $B_w: 1,\frac{1}{2}$.
Finally, when $w= [0,0,1,3]$ we get $A_w: \frac{1}{2}$ and $B_w: 1$.
So when $p \equiv 1 \pmod 4$ we get
\begin{equation*}
N_p(\lambda) = \frac{p^{3}-1}{p-1} + 
{_{3}G_{3}}
\biggl[ \begin{array}{ccc} \frac{1}{4} & \frac{1}{2} & \frac{3}{4} \\
 1 & 1 & 1 \end{array}
\Big| \; \lambda^4 \; \biggr]_p
-3p \cdot
{_{2}G_{2}}
\biggl[ \begin{array}{cc} \frac{1}{4} & \frac{3}{4} \\
 1 & \frac{1}{2} \end{array}
\Big| \; \lambda^4 \; \biggr]_p
-12p \cdot
{_{1}G_{1}}
\biggl[ \begin{array}{c} \frac{1}{2}  \\
 1 \end{array}
\Big| \; \lambda^4 \; \biggr]_p.
\end{equation*}
This corresponds to Theorem 1.3 in \cite{Go}, after simplification of the final term.


\section{Preliminaries}\label{sec_Prelim}
Let $\mathbb{Z}_p$ denote the ring of $p$-adic integers, $\mathbb{Q}_p$ the field of $p$-adic numbers, $\bar{\mathbb{Q}_p}$ the algebraic closure of $\mathbb{Q}_p$, and $\mathbb{C}_p$ the completion of $\bar{\mathbb{Q}_p}$.

Let $\zeta_p$ be a fixed primitive $p$-th root of unity in $\bar{\mathbb{Q}_p}$. We define the additive character $\theta : \mathbb{F}_p \rightarrow \mathbb{Q}_p(\zeta_p)$ by $\theta(x):=\zeta_p^{x}$. 
We note that $\mathbb{Z}^{*}_p$ contains all $({p-1})$-st roots of unity. 
Thus we can consider multiplicative characters of $\mathbb{F}_p^{*}$ to be maps $\chi: \mathbb{F}_p^{*} \to \mathbb{Z}_{p}^{*}$. 
Recall that for $\chi \in \widehat{\mathbb{F}_p^{*}}$, the Gauss sum $g(\chi)$ is defined by 
$g(\chi):= \sum_{x \in \mathbb{F}_p} \chi(x) \theta(x).$

The following useful result gives a simple expression for the product of two Gauss sums. For $\chi \in \widehat{\mathbb{F}_p^{*}}$ we have
\begin{equation}\label{for_GaussConj}
g(\chi)g(\bar{\chi})=
\begin{cases}
\chi(-1) p & \text{if } \chi \neq \varepsilon,\\
1 & \text{if } \chi= \varepsilon.
\end{cases}
\end{equation}

In \cite{Ko4}, Koblitz provides a formula for the number of points in $\mathbb{P}^{n-1}(\mathbb{F}_p)$ on the hypersurface 
$x_1^a + x_2^a + \dots + x_n^a - a \lambda x_1 x_2 \dots x_n=0$,
for some $a, \lambda \in \mathbb{F}_p$, where $p \equiv 1 \pmod a$.
Koblitz's result builds on the work of Weil \cite{W} which provides a formula in the $\lambda=0$ case. Weil's results hold for all primes $p$ and it is a relatively straightforward exercise to extend Koblitz's result to all primes in the case $a=n$, as follows.

\begin{theorem}[cf Koblitz \cite{Ko4} {Thm.~2}, Weil \cite{W}]\label{thm_Koblitz}
Let $N_p(\lambda)$ be the number of points in $\mathbb{P}^{n-1}(\mathbb{F}_p)$ on $\sum_{i=1}^n x_i^n - n \lambda \prod_{j=1}^{n} x_i=0$, for some $n \in \mathbb{F}_p^{*}$, $\lambda \in \mathbb{F}_p$. Let $T$ be a fixed generator for $\widehat{\mathbb{F}_p^{*}}$, $d:=\gcd(p-1,n)$ and $t:=\frac{p-1}{d}$. 
Let $W$ be defined by (\ref{Def_W}). 
Then
\begin{equation*}
N_p(\lambda) = \sum_{w \in W} N_p(0,w) + \frac{1}{p-1} \sum_{w \in W} \; \sum_{j=0}^{t-1} 
\frac{\prod_{i=1}^{n} g(T^{w_i t + j})}{g(T^{nj})}\; T^{nj}(n \lambda).\\
\end{equation*}
where 
\begin{equation*}\label{Def_Np0}
N_p(0,w):=
\begin{cases}
0 & \text{if some but not all } w_i=0,\\[6pt]
\frac{p^{n-1}-1}{p-1} & \text{if all } w_i=0,\\[6pt]
\frac{1}{p} \prod_{i=1}^{n} g(T^{w_i t}) & \text{if all } w_i \neq 0.
\end{cases}
\end{equation*}
\end{theorem}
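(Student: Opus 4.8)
The plan is to count the torus points (all coordinates nonzero) separately from the coordinate hyperplanes, invoke Weil's formula for the diagonal part, and expand the torus contribution in Gauss sums. First I would partition $\mathbb{P}^{n-1}(\mathbb{F}_p)$ into points having some coordinate equal to zero and points with all coordinates nonzero. On the first locus the monomial $\prod_i x_i$ vanishes, so such a point lies on the hypersurface if and only if it lies on the diagonal $\sum_i x_i^n = 0$, independently of $\lambda$. Writing $N_p^{\mathrm{tor}}(\lambda)$ for the number of solutions with all $x_i \neq 0$, this gives $N_p(\lambda) = N_p(0) + \bigl(N_p^{\mathrm{tor}}(\lambda) - N_p^{\mathrm{tor}}(0)\bigr)$. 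By Weil \cite{W}, $N_p(0)$, the count for $\sum_i x_i^n = 0$, equals $\sum_{w \in W} N_p(0,w)$: the all-trivial term gives $\tfrac{p^{n-1}-1}{p-1}$, the fully-nontrivial terms give the Jacobi--Gauss products $\tfrac1p \prod_i g(T^{w_i t})$, and the mixed terms vanish. So the first summand of the theorem is exactly this diagonal count.

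It remains to produce the double sum as $N_p^{\mathrm{tor}}(\lambda) - N_p^{\mathrm{tor}}(0)$. Using $\theta$ to detect the zero set, I would write $N_p^{\mathrm{tor}}(\lambda) = \frac{1}{p(p-1)}\bigl((p-1)^n + \Sigma(\lambda)\bigr)$, where $\Sigma(\lambda) = \sum_{z \in \mathbb{F}_p^*}\sum_{x \in (\mathbb{F}_p^*)^n}\theta(z f(x))$ and $f(x) = \sum_i x_i^n - n\lambda \prod_i x_i$. Next I expand every additive character through the multiplicative Fourier inversion $\theta(y) = \frac{1}{p-1}\sum_{\chi} g(\chi)\bar{\chi}(y)$, valid for $y \in \mathbb{F}_p^*$, introducing a character $\rho_i$ for each power term $\theta(z x_i^n)$ and a character $\psi$ for the monomial term $\theta(-z n\lambda \prod_i x_i)$. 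Orthogonality of $\rho_i^n \psi$ over each $x_i \in \mathbb{F}_p^*$ forces $\rho_i^n = \bar{\psi}$ for all $i$, and orthogonality over $z \in \mathbb{F}_p^*$ forces $\prod_i \rho_i = \bar{\psi}$.

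The heart of the argument is to parametrize the surviving characters. Writing $\rho_i = T^{r_i}$, the condition that all $\rho_i^n$ agree is equivalent to $r_i \equiv r_{i'} \pmod{t}$, since $\gcd(n/d, t) = 1$; hence $r_i = w_i t + j$ with a common $j \in \{0, \dots, t-1\}$ and $w_i \in \{0, \dots, d-1\}$. This forces $\bar{\psi} = T^{nj}$, and the constraint $\prod_i \rho_i = \bar{\psi}$ becomes $t\sum_i w_i \equiv 0 \pmod{p-1}$, i.e. $\sum_i w_i \equiv 0 \pmod d$, so $w$ ranges over $W$. Carrying out the orthogonality bookkeeping then yields $\Sigma(\lambda) = \sum_{w \in W}\sum_{j=0}^{t-1} \prod_i g(T^{w_i t + j})\, g(T^{-nj})\, T^{nj}(-n\lambda)$, while the same computation without the monomial character gives $\Sigma(0) = (p-1)\sum_{w \in W}\prod_i g(T^{w_i t})$.

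Finally I would reconcile these with the claimed form. For $1 \le j \le t-1$ one has $T^{nj} \neq \varepsilon$, so (\ref{for_GaussConj}) gives $g(T^{-nj})\, T^{nj}(-n\lambda) = \frac{p}{g(T^{nj})}T^{nj}(n\lambda)$; dividing $\Sigma(\lambda)$ by $p(p-1)$ reproduces the $j \geq 1$ part of the double sum. The $j=0$ part of $\Sigma(\lambda)$, which uses $g(\varepsilon) = -1$, combines with $-\Sigma(0)$ to give precisely the $j=0$ term $-\tfrac{1}{p-1}\sum_w \prod_i g(T^{w_i t})$, consistent with the convention $\varepsilon(n\lambda) = 1$ for $\lambda \neq 0$ (and the whole double sum vanishes at $\lambda = 0$ since $\chi(0) = 0$, in agreement with $N_p(0) = \sum_w N_p(0,w)$). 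I expect the main obstacle to be exactly this reconciliation together with the parametrization: verifying that the naively $\lambda$-independent pieces, namely the diagonal sum $\Sigma(0)$ and the $j=0$ deformation terms, recombine without double counting and with the correct powers of $p$. This is the step where the general case $d < n$ (i.e. $p \not\equiv 1 \pmod n$) genuinely departs from Koblitz's setting $d = n$, since solving $\rho_i^n = \bar{\psi}$ now carries a $d$-fold multiplicity recorded by the parameters $w_i \in \{0,\dots,d-1\}$.
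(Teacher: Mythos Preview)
Your approach is correct and is precisely the standard Weil--Koblitz point-counting argument that the paper alludes to. Note, however, that the paper does \emph{not} actually give a proof of this theorem: it states the result as a straightforward extension of Koblitz's Theorem~2 (which assumes $p\equiv 1 \pmod n$) to arbitrary odd primes, remarks that ``Theorem~\ref{thm_Koblitz} can also be proved directly using the point counting technique in~\cite{W},'' and then moves on. Your sketch fills in exactly this omitted argument, including the key parametrization step $\rho_i = T^{w_i t + j}$ that records how the general case $d<n$ differs from Koblitz's setting; the reconciliation of the $j=0$ piece with $\Sigma(0)$ is handled correctly.
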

Theorem \ref{thm_Koblitz} can also be proved directly using the point counting technique in \cite{W}. This technique is also often used to establish results involving finite field hypergeometric functions \cite{BRS, McC8, F2, Go, L2, McC5}.

We define the Teichm\"{u}ller character to be the primitive character $\omega: \mathbb{F}_p \rightarrow\mathbb{Z}^{*}_p$ satisfying $\omega(x) \equiv x \pmod p$ for all $x \in \{0,1, \ldots, p-1\}$.
We now recall the $p$-adic gamma function. For further details, see \cite{Ko}.
Let $p$ be an odd prime.  For $n \in \mathbb{Z}^{+}$ we define the $p$-adic gamma function as
\begin{align*}
\gfp{n} &:= {(-1)}^n \prod_{\substack{0<j<n\\p \nmid j}} j \\
\intertext{and extend it to all $x \in\mathbb{Z}_p$ by setting $\gfp{0}:=1$ and} 
\gfp{x} &:= \lim_{n \rightarrow x} \gfp{n}
\end{align*}
for $x\neq 0$, where $n$ runs through any sequence of positive integers $p$-adically approaching $x$. 
This limit exists, is independent of how $n$ approaches $x$, and determines a continuous function
on $\mathbb{Z}_p$ with values in $\mathbb{Z}^{*}_p$.
We now state a product formula for the $p$-adic gamma function.
If $m\in\mathbb{Z}^{+}$, $p \nmid m$ and $x=\frac{r}{p-1}$ with $0\leq r \leq p-1$, then
\begin{equation}\label{for_pGammaMult}
\prod_{h=0}^{m-1} \gfp{\tfrac{x+h}{m}}=\omega\left(m^{(1-x)(1-p)}\right)
\gfp{x} \prod_{h=1}^{m-1} \gfp{\tfrac{h}{m}}.
\end{equation}
We note also that
\begin{equation}\label{for_pGammaOneMinus}
\gfp{x}\gfp{1-x} = {(-1)}^{x_0},
\end{equation}
where $x_0 \in \{1,2, \dotsc, {p}\}$ satisfies $x_0 \equiv x \pmod {p}$. The Gross-Koblitz formula \cite{GK} allows us to relate Gauss sums and the $p$-adic gamma function. Let $\pi \in \mathbb{C}_p$ be the fixed root of $x^{p-1}+p=0$ that satisfies ${\pi \equiv \zeta_p-1 \pmod{{(\zeta_p-1)}^2}}$. Then we have the following result.
\begin{theorem}[Gross, Koblitz \cite{GK}]\label{thm_GrossKoblitz}
For $ j \in \mathbb{Z}$,
\begin{equation*} 
g(\bar{\omega}^j)=-\pi^{(p-1) \langle{\frac{j}{p-1}}\rangle} \: \gfp{\langle{\tfrac{j}{p-1}}\rangle}.
\end{equation*}
\end{theorem}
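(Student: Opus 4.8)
The plan is to follow Dwork's analytic method, whose single delicate input---the normalisation of the $p$-th root of unity---is exactly what pins down the precise constant. Since $\bar{\omega}$ has order dividing $p-1$, both sides depend only on $j$ modulo $p-1$: indeed $(p-1)\langle\tfrac{j}{p-1}\rangle$ equals the residue $j_0\in\{0,1,\dots,p-2\}$ of $j$, and $\gfp{\langle\tfrac{j}{p-1}\rangle}=\gfp{\tfrac{j_0}{p-1}}$. So I may assume $0\le j\le p-2$, where the claim reads $g(\bar{\omega}^j)=-\pi^{j}\,\gfp{\tfrac{j}{p-1}}$.

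First I would introduce Dwork's splitting function $E(X):=\exp\bigl(\pi X-\pi X^p\bigr)=\sum_{m\ge 0}c_m X^m$ and record the two facts that drive the argument. By Dwork's lemma the coefficients satisfy $\operatorname{ord}_p(c_m)\ge\frac{(p-1)m}{p^2}$ (the low-valuation terms cancelling via Wilson-type congruences), so $E$ is overconvergent, with radius of convergence strictly greater than $1$; and the normalisation $\pi\equiv\zeta_p-1\pmod{(\zeta_p-1)^2}$ yields the splitting identity $\zeta_p^{\,a}=E(\omega(a))$ for all $a\in\mathbb{F}_p$, where the right-hand side is the genuine convergent value of the power series at the Teichm\"{u}ller point $\omega(a)$---not the invalid naive substitution $\exp(0)=1$. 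Note that for $0\le m\le p-1$ the factor $\exp(-\pi X^p)$ is inert, so $c_m=\pi^m/m!$ there; this governs the leading behaviour.

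Next I would feed the splitting identity into the Gauss sum and use orthogonality of Teichm\"{u}ller powers. Writing $g(\bar{\omega}^j)=\sum_{x\in\mathbb{F}_p^{*}}\bar{\omega}^j(x)\,\zeta_p^{x}=\sum_{x\in\mathbb{F}_p^{*}}\bar{\omega}^j(x)\,E(\omega(x))=\sum_m c_m\sum_{x\in\mathbb{F}_p^{*}}\omega^{m-j}(x)$, and noting the inner sum equals $p-1$ exactly when $m\equiv j\imod{p-1}$ and vanishes otherwise, I obtain the clean intermediate formula
\begin{equation*}
g(\bar{\omega}^j)=(p-1)\sum_{\substack{m\ge 0\\ m\equiv j\imod{p-1}}}c_m,
\end{equation*}
the series converging because $c_m\to 0$. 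As a check, at $j=0$ one has $c_0=1$ and $c_{p-1}=\pi^{p-1}/(p-1)!=-p/(p-1)!$, and Wilson's theorem together with $\tfrac{-1}{p-1}=1+p+p^2+\cdots$ recovers $g(\varepsilon)=-1$.

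It then remains to prove the purely $p$-adic identity
\begin{equation*}
\sum_{\substack{m\ge 0\\ m\equiv j\imod{p-1}}}c_m=\frac{-\pi^{j}}{p-1}\,\gfp{\tfrac{j}{p-1}},\qquad 0\le j\le p-2,
\end{equation*}
and this is where the real work lies. Using the explicit expansion $c_m=\sum_{b\ge 0}\frac{(-1)^b\,\pi^{\,m-(p-1)b}}{(m-pb)!\,b!}$, I would factor out $\pi^{j}$, regroup the surviving terms along the progression $m=j+(p-1)k$, and identify the resulting convergent series with a regularised factorial, i.e.\ with $\gfp{\tfrac{j}{p-1}}$ read off from its defining limit $\gfp{x}=\lim_{n\to x}\gfp{n}$ over integers $n$ approaching $\tfrac{j}{p-1}$ $p$-adically. \emph{This coefficient identity is the main obstacle}: justifying the rearrangement and matching the limit against Morita's gamma function requires the Dwork congruences (uniform estimates on the partial sums of the $c_m$), and it is precisely here that the overconvergence of $E$ is indispensable. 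Finally, it is the normalisation $E(1)=\zeta_p$ that fixes the otherwise ambiguous $(p-1)$-st root of unity and yields the exact constant $-1$; by contrast, Stickelberger's congruence alone pins $g(\bar{\omega}^j)$ down only up to such a root of unity and would leave the formula unproven.
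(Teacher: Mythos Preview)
The paper does not prove this theorem at all: it is quoted in Section~\ref{sec_Prelim} as a known result of Gross and Koblitz \cite{GK} and used as a black box in the proof of Theorem~\ref{thm_Main}. So there is no ``paper's own proof'' to compare against.

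That said, your sketch follows the standard elementary route (Dwork's exponential rather than the crystalline cohomology of the original \cite{GK}), and the set-up is correct: the reduction to $0\le j\le p-2$, the overconvergence of $E(X)=\exp(\pi X-\pi X^p)$, the splitting identity $\theta(a)=E(\omega(a))$ pinned down by the choice of $\pi$, and the orthogonality step giving $g(\bar\omega^j)=(p-1)\sum_{m\equiv j\,(p-1)}c_m$ are all right. You are also honest that the remaining coefficient identity is the whole content of the theorem. But as written this is an outline, not a proof: the passage from $\sum_{m\equiv j}c_m$ to $-\pi^j\gfp{\tfrac{j}{p-1}}/(p-1)$ is exactly where the Dwork congruences (or the Dieudonn\'e--Dwork lemma applied to $E$) must be invoked to control the tail and to identify the limit with Morita's $\Gamma_p$ via the Kazandzidis-type congruences. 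Simply ``regrouping'' and appealing to the defining limit of $\Gamma_p$ is not enough; one needs the explicit estimate $\operatorname{ord}_p(c_m)\ge m\cdot\tfrac{p-1}{p^2}$ together with the exact computation of $c_m\bmod{\pi^{m+1}}$ to isolate the leading term and show the remainder is divisible by an extra power of $p$. If you want a complete elementary argument along these lines, see Koblitz's \emph{$p$-adic analysis} \cite{Ko} or Lang's \emph{Cyclotomic Fields~II}; your proposal is the correct skeleton of that proof but stops short of the one genuinely nontrivial step.
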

\noindent We recall also the following result which can be derived from (\ref{for_pGammaMult}).
\begin{lemma}[\cite{McC7} Lemma 4.1] \label{lem_pGamma}
Let $p$ be prime. For $0 \leq j \leq p-2$ and $n \in \mathbb{Z}^{+}$ with $p \nmid n$,
\begin{equation*}\label{lem_gammamtj}
\gfp{\Big\langle {\tfrac{-nj}{p-1}} \Big\rangle}\; {\omega(n^{-nj}) \displaystyle\prod_{h=1}^{n-1} \gfp{\tfrac{h}{n}}} = \displaystyle\prod_{h=0}^{n-1} \gfp{\Big\langle \tfrac{1+h}{n} - \tfrac{j}{p-1} \Big\rangle}.
\end{equation*}
\end{lemma}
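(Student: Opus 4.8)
\emph{Proof proposal.} The plan is to derive the identity directly from the $p$-adic gamma multiplication formula (\ref{for_pGammaMult}), starting from the product on the right-hand side of Lemma \ref{lem_pGamma} and transforming it into the left-hand side. The crucial observation is that specializing (\ref{for_pGammaMult}) to $m=n$ (legitimate since $p\nmid n$) and to the particular value $x=\langle\frac{-nj}{p-1}\rangle$ should reproduce the claimed expression, provided I can match the arguments of the two products and the Teichm\"{u}ller factor.

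First I would set $\xi:=\langle\frac{-nj}{p-1}\rangle$ and write $\xi=\frac{r}{p-1}$ with $r:=(-nj)\bmod(p-1)$, so that $0\le r\le p-2\le p-1$; this is exactly the shape of the parameter $x$ required by the hypotheses of (\ref{for_pGammaMult}). The main step is then to prove the multiset identity
\[
\Bigl\{\,\Big\langle \tfrac{1+h}{n} - \tfrac{j}{p-1}\Big\rangle : 0 \le h \le n-1\,\Bigr\}
= \Bigl\{\,\tfrac{\xi + h}{n} : 0 \le h \le n-1\,\Bigr\},
\]
so that $\prod_{h=0}^{n-1}\gfp{\langle\frac{1+h}{n}-\frac{j}{p-1}\rangle}=\prod_{h=0}^{n-1}\gfp{\frac{\xi+h}{n}}$. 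To see this I would use that $\xi$ differs from $\frac{-nj}{p-1}$ by an integer $M:=\lfloor\frac{-nj}{p-1}\rfloor$, so that $-\frac{j}{p-1}=\frac{\xi+M}{n}$ and hence $\frac{1+h}{n}-\frac{j}{p-1}=\frac{\xi+(1+h+M)}{n}$. As $h$ runs over $0,\dots,n-1$, the integer $1+h+M$ runs over a complete residue system modulo $n$; writing $1+h+M=qn+h'$ with $0\le h'\le n-1$ and using $\xi\in[0,1)$ gives $\langle\frac{\xi+(1+h+M)}{n}\rangle=\frac{\xi+h'}{n}$, with $h'$ hitting each of $0,\dots,n-1$ exactly once. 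This establishes the multiset identity, and with it the equality of the two products.

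With the products identified, applying (\ref{for_pGammaMult}) with $m=n$ and $x=\xi$ yields $\prod_{h=0}^{n-1}\gfp{\frac{\xi+h}{n}}=\omega(n^{(1-\xi)(1-p)})\,\gfp{\xi}\,\prod_{h=1}^{n-1}\gfp{\frac{h}{n}}$, whose last two factors already match those in Lemma \ref{lem_pGamma}. It remains to reconcile the Teichm\"{u}ller factors. Here I would compute $(1-\xi)(1-p)=(1-\frac{r}{p-1})(1-p)=r+1-p$, an integer, and reduce its exponent modulo $p-1$: since $p\equiv 1\pmod{p-1}$ we get $r+1-p\equiv r\equiv -nj\pmod{p-1}$, whence $\omega(n^{(1-\xi)(1-p)})=\omega(n)^{r+1-p}=\omega(n)^{-nj}=\omega(n^{-nj})$. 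Combining the three displays gives exactly the stated identity. I expect the only real obstacle to be the bookkeeping in the multiset identity of the second step; the matching of the $\omega$-factor is routine once one notices that the exponent $(1-\xi)(1-p)$ is an integer congruent to $-nj$ modulo $p-1$.
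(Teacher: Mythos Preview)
Your proposal is correct and follows precisely the route the paper indicates: the paper does not give a detailed proof but simply notes that the lemma ``can be derived from (\ref{for_pGammaMult}),'' and your argument supplies exactly that derivation, specializing the multiplication formula to $m=n$ and $x=\langle\frac{-nj}{p-1}\rangle$ and carefully verifying the multiset equality of arguments and the matching of the Teichm\"uller factor.
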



\section{Proofs}\label{sec_Proofs}

\begin{proof}[Proof of Theorem \ref{thm_Main}]
By Theorem \ref{thm_Koblitz} we have
\begin{equation*}
N_p(\lambda) = \frac{p^{n-1}-1}{p-1} + \frac{1}{p} \sum_{\substack{w \in W \\ w_i \neq 0}}  \prod_{i=1}^{n} g(T^{w_i t})+ \frac{1}{p-1} \sum_{w \in W} \; \sum_{j=0}^{t-1} 
\frac{\prod_{i=1}^{n} g(T^{w_i t + j})}{g(T^{nj})}\; T^{nj}(n \lambda).\\
\end{equation*}
From (\ref{for_GaussConj}) we get that
\begin{equation*}
g(T^{nj})g(T^{-nj})=
\begin{cases}
T^{nj}(-1) \; p & \text{if } T^{nj} \neq \varepsilon,\\
1 & \text{if } T^{nj}= \varepsilon.
\end{cases}
\end{equation*}
Now $T^{nj}= \varepsilon$ if and only if $j=0$, as $0 \leq j < \frac{p-1}{d}$. Therefore
\begin{align*}
\notag N_p(\lambda) 
&= \frac{p^{n-1}-1}{p-1} + \frac{1}{p} \sum_{\substack{w \in W \\ \text{all } w_i \neq 0}}  \prod_{i=1}^{n} g(T^{w_i t}) 
-\frac{1}{p-1} \sum_{w \in W}  \prod_{i=1}^{n} g(T^{w_i t}) \\
\notag & \qquad \qquad  \qquad \qquad +\frac{1}{p(p-1)} \sum_{w \in W} \; \sum_{j=1}^{t-1} 
\prod_{i=1}^{n} g(T^{w_i t + j}) \; g(T^{-nj})\; T^{nj}(-n \lambda)\\
\notag &= \frac{p^{n-1}-1}{p-1} + \frac{1}{p} \sum_{\substack{w \in W \\ \text{all } w_i \neq 0}}  \prod_{i=1}^{n} g(T^{w_i t}) 
-\frac{1}{p-1} \sum_{w \in W}  \prod_{i=1}^{n} g(T^{w_i t}) \left[ 1 - \frac{1}{p} \right]\\
\notag & \qquad \qquad  \qquad \qquad +\frac{1}{p(p-1)} \sum_{w \in W} \; \sum_{j=0}^{t-1} 
\prod_{i=1}^{n} g(T^{w_i t + j}) \; g(T^{-nj})\; T^{nj}(-n \lambda)\\
\notag &= \frac{p^{n-1}-1}{p-1} - \frac{1}{p} \sum_{\substack{w \in W \\ \text{some } w_i=0}}  \prod_{i=1}^{n} g(T^{w_i t}) \\
\notag & \qquad \qquad  \qquad \qquad +\frac{1}{p(p-1)} \sum_{w \in W} \; \sum_{j=0}^{t-1} 
\prod_{i=1}^{n} g(T^{w_i t + j}) \; g(T^{-nj})\; T^{nj}(-n \lambda).\\
\notag &= \frac{p^{n-1}-1}{p-1} - \frac{1}{p} \sum_{\substack{w \in W \\ \text{some } w_i=0}}  \prod_{i=1}^{n} g(T^{w_i t}) \\
& \qquad \qquad  \qquad \qquad +\frac{1}{p(p-1)} \sum_{[w] \in W/\sim} \; \sum_{j=0}^{p-2} 
\prod_{i=1}^{n} g(T^{w_i t + j}) \; g(T^{-nj})\; T^{nj}(-n \lambda).
\end{align*}
We now examine the inner sum in the last term above, which we will denote $R_{[w]}$, i.e.,
\begin{equation*}
R_{[w]} = \sum_{j=0}^{p-2} 
\prod_{i=1}^{n} g(T^{w_i t + j}) \; g(T^{-nj})\; T^{nj}(-n \lambda),
\end{equation*}
and
\begin{equation}\label{Np_for1}
N_p(\lambda) = \frac{p^{n-1}-1}{p-1} - \frac{1}{p} \sum_{\substack{w \in W \\ \text{some } w_i=0}}  \prod_{i=1}^{n} g(T^{w_i t}) +\frac{1}{p(p-1)} \sum_{[w] \in W/\sim} \; R_{[w]}.
\end{equation}
We note that $R_{[w]}$ is independent of choice of representative for the equivalence class.
Recalling the notation from Section \ref{sec_Results} we see that
\begin{align*}
R_{[w]}  = \sum_{j=0}^{p-2} \; \prod_{k \in S_w^c} g(T^{k t + j})^{n_k} \; g(T^{-nj})\; T^{nj}(-n \lambda).
\end{align*}
Again using (\ref{for_GaussConj}) we get that
\begin{equation*}
g(T^{k t + j}) \, g(T^{-k t - j})=
\begin{cases}
T^{k t + j}(-1) \; p & \text{if } T^{k t + j} \neq \varepsilon,\\
1 & \text{if } T^{k t + j}= \varepsilon.
\end{cases}
\end{equation*}
Now $T^{k t + j}= \varepsilon$ if and only if $k=j=0$ or $k>0, j=(d-k)t$. So, as $n_k\geq1$ when $k \in S_w^c$,
\begin{align*}
R_{[w]}  &= \sum_{\substack{j=0\\j \not\equiv 0 \imod{t}}}^{p-2} \; 
\prod_{k \in S_w^c} \frac{g(T^{k t + j})^{n_k-1} \; T^{k t + j}(-1) \; p}{g(T^{-k t - j})}
 \; g(T^{-nj})\; T^{nj}(-n \lambda)\\
 &  \qquad \qquad \qquad \qquad -\sum_{a=0}^{d-1} \, \prod_{k \in S_w^c} \frac{g(T^{(k+a) t })^{n_k-1} }{g(T^{-(k+a)t})}
 \prod_{\substack{k \in S_w^c\\k \not\equiv -a \imod{d}}} \left(T^{(k+a) t}(-1) \; p \right)\\
&= \sum_{j=0}^{p-2} \; 
\prod_{k \in S_w^c} \frac{g(T^{k t + j})^{n_k-1} \; T^{k t + j}(-1) \; p}{g(T^{-k t - j})}
 \; g(T^{-nj})\; T^{nj}(-n \lambda)\\
  &  \qquad -\sum_{a=0}^{d-1} \, \prod_{k \in S_w^c} \frac{g(T^{(k+a) t })^{n_k-1} }{g(T^{-(k+a)t})}
 \left[ \prod_{\substack{k \in S_w^c\\k \not\equiv -a (d)}} \left(T^{(k+a) t}(-1) \; p \right)
 -  \prod_{k \in S_w^c} \left(T^{(k+a) t}(-1) \; p \right) \right].\\
\end{align*}
For a given $0 \leq a \leq d-1$ define 
$$v_a:=
\begin{cases}
0 & \text{if } a=0\\
d-a & \text{if } a>0.
\end{cases}$$
Then $0 \leq v_a \leq d-1$ and $v_a \equiv - a \pmod{d}$.
So
\begin{align*}
R_{[w]}
&= \sum_{j=0}^{p-2} \; 
\prod_{k \in S_w^c} \frac{g(T^{k t + j})^{n_k-1} \; T^{k t + j}(-1) \; p}{g(T^{-k t - j})}
 \; g(T^{-nj})\; T^{nj}(-n \lambda)\\
  &  \qquad + (p-1) \sum_{\substack{a=0\\v_a \in S_w^c}}^{d-1} \, \prod_{k \in S_w^c} \frac{g(T^{(k+a) t })^{n_k-1} }{g(T^{-(k+a)t})}
 \prod_{\substack{k \in S_w^c\\k \neq v_a}} \left(T^{(k+a) t}(-1) \; p \right).
\end{align*}
We will analyze the two terms appearing on the right-hand side of the above equation separately, and refer to them as $R_{[w]}^{\prime}$ and $R_{[w]}^{\prime\prime}$ respectively. It is easy to see that $R_{[w]}^{\prime}$ is independent of choice of equivalence class representative thus so is $R_{[w]}^{\prime\prime}$.
We note first that for a given $0 \leq a \leq d-1$, if $v_a \in S_w^c$ then
\begin{align*}
\prod_{k \in S_w^c} {g(T^{-(k+a)t})}  \prod_{\substack{k \in S_w^c\\k \neq v_a}} {g(T^{(k+a)t})}
&= - \prod_{\substack{k \in S_w^c\\k \neq v_a}} {g(T^{-(k+a)t})}  \prod_{\substack{k \in S_w^c\\k \neq v_a}} {g(T^{(k+a)t})}\\
&= - \prod_{\substack{k \in S_w^c\\k \neq v_a}} \left[ T^{(k+a)t}(-1) \, p \right]
\end{align*}
using (\ref{for_GaussConj}) and the fact that $g(\varepsilon)=-1$. Thus
\begin{align*}
R_{[w]}^{\prime\prime}
&=-(p-1) \sum_{\substack{a=0\\v_a \in S_w^c}}^{d-1} \, \prod_{k \in S_w^c} g(T^{(k+a) t })^{n_k-1}
 \prod_{\substack{k \in S_w^c\\k \neq v_a}} {g(T^{(k+a)t})}\\
&=(p-1) \sum_{\substack{a=0\\v_a \in S_w^c}}^{d-1} \, \prod_{k \in S_w^c} g(T^{(k+a) t })^{n_k}\\
&=(p-1) \sum_{\substack{a=0\\v_a \in S_w^c}}^{d-1} \, \prod_{i=1}^{n} g(T^{(w_i+a) t }).
\end{align*}
For a given $0 \leq a \leq d-1$ let $\bar{a}$ be the $n$-tuple $(a,a, \cdots, a)$. Note then that 
$$v_a \in S_w^c \Longleftrightarrow 0 \in S_{w+\bar{a}}^{c}$$
where the addition $w+\bar{a}$ is considered modulo $d$ so $w+\bar{a} \in W$.
Therefore
\begin{align*}
\sum_{[w] \in W/\sim} R_{[w]}^{\prime\prime}
&= (p-1) \sum_{[w] \in W/\sim} \sum_{\substack{a=0\\0 \in S_{w+\bar{a}}^{c}}}^{d-1} \, \prod_{i=1}^{n} g(T^{(w_i+a) t })\\
&=  (p-1) \sum_{\substack{w \in W \\ \text{some } w_i=0}}  \prod_{i=1}^{n} g(T^{w_i t }).
\end{align*}
So now (\ref{Np_for1}) becomes
\begin{align}\label{Np_for2}
\notag N_p(\lambda) 
\notag &= \frac{p^{n-1}-1}{p-1} - \frac{1}{p} \sum_{\substack{w \in W \\ \text{some } w_i=0}}  \prod_{i=1}^{n} g(T^{w_i t}) 
+\frac{1}{p(p-1)} \sum_{[w] \in W/\sim} \; \left(R_{[w]}^{\prime} +R_{[w]}^{\prime\prime} \right)\\
&= \frac{p^{n-1}-1}{p-1} +\frac{1}{p(p-1)} \sum_{[w] \in W/\sim} \; R_{[w]}^{\prime},
\end{align}
where
\begin{equation*}
R_{[w]}^{\prime} = \sum_{j=0}^{p-2} \; 
\prod_{k \in S_w^c} \frac{g(T^{k t + j})^{n_k-1} \; T^{k t + j}(-1) \; p}{g(T^{-k t - j})}
 \; g(T^{-nj})\; T^{nj}(-n \lambda).
\end{equation*}
We now switch to the $p$-adic setting to analyze $R_{[w]}^{\prime}$. We let $T=\bar{\omega}$ and use the Gross-Koblitz formula, Theorem \ref{thm_GrossKoblitz}, to get
\begin{equation*}
R_{[w]}^{\prime} = \sum_{j=0}^{p-2} \; 
\prod_{k \in S_w^c} \frac{\biggfp{\langle \frac{k}{d} +\frac{j}{p-1} \rangle}^{n_k-1} \; \bar{\omega}^{k t + j}(-1) \; p}{\biggfp{\langle -\frac{k}{d} -\frac{j}{p-1} \rangle}}
 \; \biggfp{\langle -\tfrac{nj}{p-1} \rangle} \; \bar{\omega}^{nj}(-n \lambda) \cdot \pi^{(p-1)x} \cdot (-1)^y,
\end{equation*}
where
\begin{align*}
x&=\sum_{k \in S_w^c} (n_k-1) \langle \tfrac{k}{d} +\tfrac{j}{p-1} \rangle - \sum_{k \in S_w^c}  \langle -\tfrac{k}{d} -\tfrac{j}{p-1} \rangle + \langle -\tfrac{nj}{p-1} \rangle,
\intertext{and}
y&=\sum_{k \in S_w^c} (n_k-1)- \sum_{k \in S_w^c}1 + 1.
\end{align*}
Using the facts that $\langle x \rangle=x- \left\lfloor x \right\rfloor$, 
\begin{equation*}
 \sum_{k \in S_w^c} n_k = \sum_{i=0}^{d-1} n_k = n,
\end{equation*}
and
\begin{equation*}
 \sum_{k \in S_w^c} k \, n_k = \sum_{i=0}^{n-1} w_i \equiv 0 \pmod{d},
\end{equation*}
it is easy to see that
\begin{align}\label{for_x}
x&=  \sum_{k \in S_w^c} \frac{k \, n_k}{d} - \sum_{k \in S_w^c} (n_k-1) \left\lfloor \tfrac{k}{d} +\tfrac{j}{p-1} \right\rfloor 
+ \sum_{k \in S_w^c} \left\lfloor  -\tfrac{k}{d} -\tfrac{j}{p-1} \right\rfloor - \left\lfloor -\tfrac{nj}{p-1} \right\rfloor \in \mathbb{Z},
\intertext{and}
\notag y&= n+1.
\end{align}
So
\begin{equation}\label{Rw1_for1}
R_{[w]}^{\prime} = \sum_{j=0}^{p-2} \; 
\prod_{k \in S_w^c} \frac{\biggfp{\langle \frac{k}{d} +\frac{j}{p-1} \rangle}^{n_k-1} \; \bar{\omega}^{k t + j}(-1) \; p}{\biggfp{\langle -\frac{k}{d} -\frac{j}{p-1} \rangle}}
 \; \biggfp{\langle -\tfrac{nj}{p-1} \rangle} \; \bar{\omega}^{nj}(-n \lambda) \cdot (-p)^{x} \cdot (-1)^{n+1}.
\end{equation}
From Lemma \ref{lem_gammamtj} we see that as $p\nmid n$,
\begin{equation}\label{lem_gammamtj_1} 
\gfp{\Big\langle {\tfrac{-nj}{p-1}} \Big\rangle} 
= 
\frac{\displaystyle\prod_{h=0}^{n-1} \gfp{\Big\langle \tfrac{1+h}{n} - \tfrac{j}{p-1} \Big\rangle}}
{{\omega(n^{-nj}) \displaystyle\prod_{h=1}^{n-1} \gfp{\tfrac{h}{n}}}}
=
\frac{\displaystyle\prod_{h=0}^{n-1} \gfp{\Big\langle \tfrac{h}{n} - \tfrac{j}{p-1} \Big\rangle}}
{{\omega(n^{-nj}) \displaystyle\prod_{h=1}^{n-1} \gfp{\tfrac{h}{n}}}}.
\end{equation}
Now
\begin{align}\label{lem_gammamtj_2} 
\notag \displaystyle\prod_{h=0}^{n-1} \gfp{\Big\langle \tfrac{h}{n} - \tfrac{j}{p-1} \Big\rangle}
&=
\displaystyle\prod_{k=0}^{d-1} \gfp{\Big\langle \tfrac{k}{d} - \tfrac{j}{p-1} \Big\rangle} 
\displaystyle\prod_{\substack{h=0\\h \not\equiv 0 \, (\frac{n}{d})}}^{n-1} \gfp{\Big\langle \tfrac{h}{n} - \tfrac{j}{p-1} \Big\rangle}\\
\notag &=
\displaystyle\prod_{k=1}^{d} \gfp{\Big\langle \tfrac{d-k}{d} - \tfrac{j}{p-1} \Big\rangle} 
\displaystyle\prod_{\substack{h=0\\h \not\equiv 0 \, (\frac{n}{d})}}^{n-1} \gfp{\Big\langle \tfrac{h}{n} - \tfrac{j}{p-1} \Big\rangle}\\
&=
\displaystyle\prod_{k=0}^{d-1} \gfp{\Big\langle -\tfrac{k}{d} - \tfrac{j}{p-1} \Big\rangle} 
\displaystyle\prod_{\substack{h=0\\h \not\equiv 0 \, (\frac{n}{d})}}^{n-1} \gfp{\Big\langle \tfrac{h}{n} - \tfrac{j}{p-1} \Big\rangle}
\end{align}
Accounting for (\ref{lem_gammamtj_1}) and (\ref{lem_gammamtj_2}) in (\ref{Rw1_for1}) we get
\begin{multline}\label{Rw1_for2}
R_{[w]}^{\prime} = \frac{(-1)^{n-1}}{\displaystyle\prod_{h=1}^{n-1} \gfp{\tfrac{h}{n}}} \;
 \sum_{j=0}^{p-2} \; 
\prod_{k \in S_w^c} \left(\biggfp{\langle \tfrac{k}{d} +\tfrac{j}{p-1} \rangle}^{n_k-1} \; \bar{\omega}^{k t + j}(-1) \; p \right)
\prod_{k \in S_w} \gfp{\langle -\tfrac{k}{d} - \tfrac{j}{p-1} \rangle} 
\\
\cdot \displaystyle\prod_{\substack{h=0\\h \not\equiv 0 \, (\frac{n}{d})}}^{n-1} \gfp{\langle \tfrac{h}{n} - \tfrac{j}{p-1} \rangle}
\cdot \bar{\omega}^{nj}(- \lambda) \cdot (-p)^{x} .
\end{multline}
Our aim now is to convert (\ref{Rw1_for2}) in to the appropriate $_mG_m[\cdots]$.
We start with a few preliminary observations. 
Similar to (\ref{lem_gammamtj_2}) we have
\begin{equation}\label{for_obs1}
\displaystyle\prod_{h=1}^{n-1} \gfp{\tfrac{h}{n}} 
= \displaystyle\prod_{k=1}^{d-1} \gfp{\tfrac{k}{d}} \displaystyle\prod_{\substack{h=0\\h \not\equiv 0 \, (\frac{n}{d})}}^{n-1} \gfp{\tfrac{h}{n}}.
\end{equation}
Furthermore, and noting that $\gfp{0}:=1$, we see that
\begin{equation}\label{for_obs2}
\displaystyle\prod_{k=1}^{d-1} \gfp{\tfrac{k}{d}} 
= \displaystyle\prod_{k=1}^{d-1} \gfp{\langle \tfrac{d-k}{d} \rangle}
= \displaystyle\prod_{k=0}^{d-1} \gfp{\langle -\tfrac{k}{d} \rangle}
= \displaystyle\prod_{k \in S_w} \gfp{\langle -\tfrac{k}{d} \rangle}
\displaystyle\prod_{k \in S_w^c} \gfp{\langle -\tfrac{k}{d} \rangle}.
\end{equation}
Using the Gross-Koblitz formula (Theorem  \ref{thm_GrossKoblitz}) and (\ref{for_GaussConj}) we also observe that 
\begin{align}\label{for_obs3}
\notag \displaystyle\prod_{k \in S_w^c}  \gfp{\langle -\tfrac{k}{d} \rangle} \; \gfp{\langle \tfrac{k}{d} \rangle}
&=
\displaystyle\prod_{k \in S_w^c} g(\bar{\omega}^{-k t}) g(\bar{\omega}^{k t}) 
\; \pi^{-(p-1) \left[ \langle\frac{-k}{d}\rangle + \langle\frac{k}{d}\rangle \right] }\\
\notag &=
(-p)^{-|S_w^c\setminus \{0\}|} \displaystyle\prod_{k \in S_w^c\setminus \{0\}} \left( \bar{\omega}^{k t}(-1) \, p \right) \\
&=
(-1)^{|S_w^c\setminus \{0\}|} \displaystyle\prod_{k \in S_w^c} \bar{\omega}^{k t}(-1),
\end{align}
as 
$$\langle\tfrac{-k}{d}\rangle + \langle\tfrac{k}{d}\rangle = - \lfloor \tfrac{-k}{d}\rfloor - \lfloor\tfrac{k}{d}\rfloor
=
-
\begin{cases}
0 & k=0\\
-1 & 1 \leq k \leq {d-1}.
\end{cases}
$$
Combining (\ref{for_obs1}), (\ref{for_obs2}) and (\ref{for_obs3}) we get that
\begin{equation*}
\frac
{
\displaystyle\prod_{k \in S_w^c} \biggfp{\langle \tfrac{k}{d} \rangle}^{n_k-1} 
\displaystyle\prod_{k \in S_w} \gfp{\langle -\tfrac{k}{d}  \rangle}
\displaystyle\prod_{\substack{h=0\\h \not\equiv 0 \, (\frac{n}{d})}}^{n-1} \gfp{\langle \tfrac{h}{n}  \rangle}
}
{\displaystyle\prod_{h=1}^{n-1} \gfp{\tfrac{h}{n}}}
=
\frac
{\displaystyle\prod_{k \in S_w^c} \biggfp{\langle \tfrac{k}{d} \rangle}^{n_k}}
{(-1)^{|S_w^c\setminus \{0\}|} \displaystyle\prod_{k \in S_w^c} \bar{\omega}^{k t}(-1)},
\end{equation*}
and so (\ref{Rw1_for2}) becomes
\begin{multline}\label{Rw1_for3}
R_{[w]}^{\prime} = (-1)^{n-1}
 \sum_{j=0}^{p-2} \; 
 (-1)^{|S_w^c\setminus \{0\}|}
\displaystyle\prod_{k \in S_w^c} \frac{\biggfp{\langle \tfrac{k}{d} +\tfrac{j}{p-1} \rangle}^{n_k-1}}{\biggfp{\langle \tfrac{k}{d} \rangle}^{n_k-1} }
\displaystyle\prod_{k \in S_w} \frac{\gfp{\langle -\tfrac{k}{d} - \tfrac{j}{p-1} \rangle}}{\gfp{\langle -\tfrac{k}{d}  \rangle}}
\\ \cdot
\displaystyle\prod_{\substack{h=0\\h \not\equiv 0 \, (\frac{n}{d})}}^{n-1} \frac{\gfp{\langle \tfrac{h}{n} - \tfrac{j}{p-1} \rangle}}
{\gfp{\langle \tfrac{h}{n}  \rangle}}
\displaystyle\prod_{k \in S_w^c} \biggfp{\langle \tfrac{k}{d} \rangle}^{n_k}
\cdot  \left(\bar{\omega}^{ j}(-1) \; p \right)^{|S_w^c|}
\cdot \bar{\omega}^{nj}(- \lambda) \cdot (-p)^{x} 
\end{multline}
We now turn our attention to the power of $(-p)$. Comparing (\ref{Rw1_for3}) to Definition \ref{def_Gp} for $_mG_m[\cdots]$ we see that for our particular arguments of the $p$-adic gamma function we would like the power of $(-p)$ to be
$$z := - \left[\sum_{k \in S_w^c} (n_k-1) \lfloor \langle \tfrac{k}{d} \rangle +\tfrac{j}{p-1} \rfloor
+\sum_{k \in S_w} \lfloor \langle -\tfrac{k}{d} \rangle - \tfrac{j}{p-1} \rfloor
+\sum_{\substack{h=0\\h \not\equiv 0 \, (\frac{n}{d})}}^{n-1} \lfloor \langle \tfrac{h}{n} \rangle - \tfrac{j}{p-1} \rfloor
\right]. $$
Comparing to (\ref{for_x}) we see that
\begin{multline*}
x-z= \sum_{k \in S_w^c} \frac{k \, n_k}{d} 
+ \sum_{k \in S_w^c} \left\lfloor  -\tfrac{k}{d} -\tfrac{j}{p-1} \right\rfloor 
- \left\lfloor -\tfrac{nj}{p-1} \right\rfloor
+\sum_{k \in S_w} \lfloor \langle -\tfrac{k}{d} \rangle - \tfrac{j}{p-1} \rfloor
+\sum_{\substack{h=0\\h \not\equiv 0 \, (\frac{n}{d})}}^{n-1} \lfloor \langle \tfrac{h}{n} \rangle - \tfrac{j}{p-1} \rfloor.
\end{multline*}
A straightforward calculation yields
\begin{equation*}
\left\lfloor -\tfrac{nj}{p-1} \right\rfloor = \sum_{h=0}^{n-1} \lfloor \tfrac{h}{n} - \tfrac{j}{p-1} \rfloor
= \sum_{\substack{h=0\\h \not\equiv 0 \, (\frac{n}{d})}}^{n-1} \lfloor \langle \tfrac{h}{n} \rangle - \tfrac{j}{p-1} \rfloor
+ \sum_{k=0}^{d-1} \lfloor \langle -\tfrac{k}{d} \rangle - \tfrac{j}{p-1} \rfloor.
\end{equation*}
So
\begin{align*}
x-z
&=\sum_{k \in S_w^c} \frac{k \, n_k}{d}
+ \sum_{k \in S_w^c} \left\lfloor  -\tfrac{k}{d} -\tfrac{j}{p-1} \right\rfloor 
-\sum_{k \in S_w^c} \lfloor \langle -\tfrac{k}{d} \rangle - \tfrac{j}{p-1} \rfloor\\
&=\sum_{k \in S_w^c} \frac{k \, n_k}{d}
+ \sum_{k \in S_w^c\setminus \{0\}} \left\lfloor  -\tfrac{k}{d} -\tfrac{j}{p-1} \right\rfloor 
-\sum_{k \in S_w^c\setminus \{0\}} \lfloor  \tfrac{d-k}{d} - \tfrac{j}{p-1} \rfloor\\
&=\sum_{k \in S_w^c} \frac{k \, n_k}{d} - {|S_w^c\setminus \{0\}|}.
\end{align*}
Recall $s:= n-|S_w^c|$, $\bar{\omega}(-1)=-1$ and $\sum_{k \in S_w^c} k \, n_k = \sum_{i=1}^{n} w_i$. Therefore
\begin{multline*}
R_{[w]}^{\prime} = 
\displaystyle\prod_{i=1}^{n} \biggfp{\tfrac{w_i}{d}}
\cdot (-p)^{\frac{1}{d} \sum_{i=1}^{n} w_i} 
\cdot p^{\delta_w}
\cdot (-1)^{n-1} \\
 \times \sum_{j=0}^{p-2} \; 
 (-1)^{js} \; \bar{\omega}^j(\lambda^n)
\displaystyle\prod_{k \in S_w} \frac{\gfp{\langle \tfrac{d-k}{d} - \tfrac{j}{p-1} \rangle}}{\gfp{\langle \tfrac{d-k}{d}  \rangle}}
\displaystyle\prod_{\substack{h=0\\h \not\equiv 0 \, (\frac{n}{d})}}^{n-1} \frac{\gfp{\langle \tfrac{h}{n} - \tfrac{j}{p-1} \rangle}}
{\gfp{\langle \tfrac{h}{n}  \rangle}}
\\ \cdot
\displaystyle\prod_{k \in S_w^c} \frac{\biggfp{\langle -\tfrac{d-k}{d} +\tfrac{j}{p-1} \rangle}^{n_k-1}}{\biggfp{\langle -\tfrac{d-k}{d} \rangle}^{n_k-1} }
\cdot (-p)^z 
\end{multline*}
where
\begin{equation*}
\delta_w :=
\begin{cases}
1 & \text{if } 0 \in S_w^c\\
0 & \text{if } 0 \in S_w.
\end{cases}
\end{equation*}
So
\begin{equation*}
R_{[w]}^{\prime} = 
\displaystyle\prod_{i=1}^{n} \biggfp{\tfrac{w_i}{d}}
\cdot (-p)^{\frac{1}{d} \sum_{i=1}^{n} w_i} 
\cdot p^{\delta_w}
\cdot (-1)^{n}
\cdot (p-1) \cdot
{_{s}G_{s}}
\biggl[ \begin{array}{c} A_w \\
 B_w \end{array}
\Big| \; \lambda^n \; \biggr]_p,
\end{equation*}
where $A_w$ and $B_w$ are the parameter lists defined in (\ref{def_Aw}) and (\ref{def_Bw}). Therefore, from (\ref{Np_for2}) we get that
\begin{equation}
\notag N_p(\lambda) 
= \frac{p^{n-1}-1}{p-1} +
 \sum_{[w] \in W/\sim} \;
\displaystyle\prod_{i=1}^{n} \biggfp{\tfrac{w_i}{d}}
\cdot (-p)^{\frac{1}{d} \sum_{i=1}^{n} w_i} 
\cdot p^{\delta_w-1}
\cdot (-1)^{n} \cdot
{_{s}G_{s}}
\biggl[ \begin{array}{c} A_w \\
 B_w \end{array}
\Big| \; \lambda^n \; \biggr]_p.
\end{equation}
If the representative $w$ we chose in each equivalence class is such that $w_i=0$ for some $1 \leq i \leq n$ then $\delta_w=1$. Therefore, choosing only representatives of this form yields
\begin{equation}
\notag N_p(\lambda) 
= \frac{p^{n-1}-1}{p-1} +
 \sum_{[w^{\ast}] \in W/\sim} \;
\displaystyle\prod_{i=1}^{n} \biggfp{\tfrac{w_i}{d}}
\cdot (-p)^{\frac{1}{d} \sum_{i=1}^{n} w_i} 
\cdot (-1)^{n} \cdot
{_{s}G_{s}}
\biggl[ \begin{array}{c} A_w \\
 B_w \end{array}
\Big| \; \lambda^n \; \biggr]_p.
\end{equation}
\end{proof}

\begin{proof}[Proof of Corollary \ref{cor_RelPrime}]
If $d=1$ then $w=(0,0,\cdots, 0)$ is the only element  in $W$, and
$S_w= \emptyset$ and  $S_w^c=\{0\}.$
Thus
$A_w$ is  $\tfrac{1}{n}, \tfrac{2}{n}, \cdots , \tfrac{n-1}{n}$ and $B_w$ is $1, 1, \cdots, 1$ (${n-1}$ times).
Therefore in this case the result in Theorem \ref{thm_Main} reduces to
\begin{equation}
\notag N_p(\lambda) 
= \frac{p^{n-1}-1}{p-1} +
(-1)^{n} \cdot
{_{n-1}G_{n-1}}
\biggl[ \begin{array}{cccc} \frac{1}{n} & \frac{2}{n} & \dots & \frac{n-1}{n} \\
 1 & 1 & \dots & 1 \end{array}
\Big| \; \lambda^n \; \biggr]_p.
\end{equation}
\end{proof}

\begin{proof}[Proof of Corollary \ref{cor_FF}]
Corollary \ref{cor_FF} can be proved as a stand alone result, in a similar manner to Theorem \ref{thm_Main} but without having to transfer to the $p$-adic setting. But having proved Theorem \ref{thm_Main} above, we now derive Corollary \ref{cor_FF} from that result.

When $d=n$ then the list
$\left[ \tfrac{h}{n} \mid 0 \leq h \leq n-1, h \not\equiv 0 \imod{\tfrac{n}{d}} \right]$
is empty and so we get the lists
\begin{equation*}
A_w: \left[ \tfrac{n-k}{n} \mid k \in S_w \right] ; \textup{ and } B_w: \left[  \tfrac{n-k}{n} \, \textup{repeated $n_k$-1 times} \mid k \in S_w^c \right] .
\end{equation*}
We note that $|A_w|=|S_w| = d- |S_w^c| = n- |S_w^c| = \sum_{k \in S_w^c} (n_k - 1) = |B_w|$.
Then using Lemma \ref{lem_G_to_F} and Theorem \ref{thm_GrossKoblitz} we see that in this case Theorem \ref{thm_Main} reduces to
\begin{equation}\label{cor_FF_for1}
N_p(\lambda) = \frac{p^{n-1}-1}{p-1} +  \sum_{[w^{\ast}] \in W/\sim}  
\prod_{i=1}^{n} g(\bar{\omega}^{w_i t})\;
{_{|S_w|}F_{|S_w|}} {\biggl( \begin{array}{c} A_{\bar{\omega},w}^{\prime} \\
 B_{\bar{\omega},w}^{\prime} \end{array}
\Big| \; \lambda^{-n} \; \biggr)}_{p}.\\
\end{equation}
This equation holds if we replace $\bar{\omega}$ by any generator $T$ for $\widehat{\mathbb{F}^{*}_{p}}$. To see this, let $T = \bar{\omega}^{\alpha}$ for some $0 \leq \alpha \leq {p-2}$ with $\gcd(\alpha, {p-1})=1$. Define a map 
$f_{\alpha}: \; {W/\sim} \, \to \, {W/\sim}$
given by
$$ \quad f_{\alpha}[w] = [\alpha w \imod{n}],$$
where if $w=(w_1, w_2, \cdots w_n)$ then $\alpha w \imod{n} = (\alpha w_1 \imod{n}, \alpha w_2 \imod{n}, \cdots, \alpha w_n \imod{n})$. Then, as $\gcd(\alpha, n)=1$, $f_{\alpha}$ is a well-defined isomorphism on ${W/\sim}$. Now replacing $[w]$ by $[\alpha w \imod{n}]$ in (\ref{cor_FF_for1}) yields the result.
\end{proof}




\vspace{9pt}

\end{document}